\newtheorem{theorem}{Theorem}
\newtheorem{lem}{Lemma}
\newtheorem{remark}{Remark}
\newtheorem{problem}{Problem}
\newtheorem{example}{Example}
\newtheorem{THEO}{Theorem}
\newcommand{\la}{\lambda}
\newcommand{\bC}{\mathbb C}
\newcommand{\bR}{\mathbb R}
\newcommand {\dq}{\mathfrak d}
\newcommand {\Log}{\text{\,Log\,}}
\newcommand {\grad}{\text{\,grad\,}}
\begin{document}

\title[Electrostatic problems with a constraint and Lam\'e equations]{Electrostatic problems with a rational constraint and degenerate Lam\'e equations}

\author{Dimitar K. Dimitrov}

\address{Departamento de Matem\'atica Aplicada, IBILCE, Universidade Estadual Paulista\\
15054-000 S\~{a}o Jos\'{e} do Rio Preto, SP, Brazil}
\email{dimitrov@ibilce.unesp.br}

\thanks{Research supported by the
Brazilian Science Foundations FAPESP under Grants 2016/09906-0 and 2017/02061-8 and CNPq under Grant 306136/2017-1.}



\author {Boris Shapiro}
\address{Department of Mathematics, Stockholm University, SE-106 91
Stockholm,
        Sweden}
\email{shapiro@math.su.se}

\dedicatory {\quad \quad \quad \quad To Heinrich Eduard Heine and Thomas Joannes Stieltjes
 \newline whose mathematics continues  to inspire after more than a century}


\subjclass[2010]{Primary 31C10, 33C45.}

\keywords{Electrostatic equilibrium,  Lam\'e differential equation}

\begin{abstract} 

In this note we extend the classical relation between the  equilibrium configurations of unit movable point charges in a plane electrostatic field created by these charges together with some fixed point charges
 and the polynomial solutions of a corresponding  Lam\'e differential equation. Namely,   we find similar relation between the equilibrium configurations of unit movable charges  subject to a certain type of rational or polynomial constraint and  polynomial solutions of a corresponding degenerate Lam\'e equation, see details below. 
 In particular,  the standard linear differential equations satisfied by the classical Hermite and Laguerre 
polynomials  belong to this class.  Besides these two classical cases, we present a number of other examples including  some relativistic 
orthogonal polynomials and  linear differential equations satisfied by those. 
\end{abstract}
 
\maketitle

\begin{center}
\section{Introduction}
\end{center}



For a given configuration of $p+1$ fixed point charges $\nu_{j}$ located at the fixed points $a_{j}\in \bC$ 
 and  $n$  unit movable charges  
 located at the variable points $x_k\in \bC,\; k=1,\dots, n$ respectively, 
 the (logarithmic) energy of this configuration  is  given by
\begin{equation}\label{L} L(x_{1}, \ldots, x_{n}) =  - \sum_{k=1}^{n} \sum_{j=0}^{p} \nu_j  \log
{|a_j -x_{k}|}\  - \sum_{1\leq i <k \leq n}  \log
{|x_{k}-x_{i}|}.
\end{equation}



\medskip 
The standard electrostatic problem in this set-up is as follows. 

\medskip
\begin{problem}
\label{pr:basic} 
 Find/count all equilibrium configurations  of movable charges, i.e.,  all the critical points of the energy function  $L(x_{1},\ldots,x_{n})$. 
 \end{problem}



\medskip
The above  electrostatic problem has been initially studied  by H.~E.~Heine \cite{He} and T.~J.~Stieltjes \cite{StB} and is now commonly known as 
the  {\it classical Heine-Stieltjes electrostatic problem}. Besides Heine and Stieltjes, the latter question has been considered by  F.~Klein \cite{Kl}, E.~B.~Van Vleck \cite{VVl},  G.~Szeg\H{o} \cite{Sz},  and G.~P\'olya \cite{Pol}, just to mention a few. A relatively recent survey of results on the classical Heine-Stieltjes problems can be found in \cite{Boris11_1}. 
The general case of arbitrary movable charges is much less studied due to the missing relation  with linear ordinary differential equations, but the case when all movable charges are of the same sign was, in particular, considered by A.~Varchenko in \cite{Va}.

\begin{THEO}[Stieltjes' theorem, \cite{StB}]\label{th:StB} 
If all $p+1$ fixed positive charges are placed on the real line, then for each of the 
$(n+p-1)!/(n!(p-1)!)$ possible placements of  $n$ unit movable charges in  $p$ finite intervals of the real axis bounded by the fixed charges,  
  the classical Heine-Stieltjes problem possesses a unique solution. 
  \end{THEO}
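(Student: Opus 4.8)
The plan is to exploit convexity...

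Let me think about Stieltjes' theorem carefully.

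We have fixed positive charges on the real line, and we want to count equilibrium configurations of movable charges.The plan is to reduce the problem to analyzing the critical points of the energy function $L$ on a carefully chosen real domain, exploiting the fact that all charges are positive and placed on the real line. First I would restrict attention to configurations in which all $n$ movable charges lie on $\bR$; the key initial claim is that under the positivity hypothesis every genuine equilibrium must in fact be real, so no generality is lost. This follows from a symmetry/potential-theoretic argument: if a critical configuration had a non-real point $x_k$, its complex conjugate would also be a movable charge location or else the gradient of $L$ could not vanish, since the logarithmic potential created by real positive charges is symmetric under complex conjugation. I would make this rigorous and thereby confine the search to the real axis.

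Next I would fix a combinatorial type, i.e. a distribution $(n_1,\dots,n_p)$ with $\sum_{i} n_i = n$ of how many movable charges lie in each of the $p$ open intervals determined by consecutive fixed charges $a_0 < a_1 < \cdots < a_p$. The number of such distributions is exactly $\binom{n+p-1}{p-1} = (n+p-1)!/(n!\,(p-1)!)$, matching the count in the statement, so the theorem reduces to showing that within each prescribed distribution there is a \emph{unique} equilibrium. On the open configuration space where the charges in interval $I_i$ are ordered and distinct, the energy $L$ (viewed with real variables) tends to $+\infty$ as any two movable charges collide or as any movable charge approaches a fixed charge or escapes to an adjacent barrier, because of the $-\log|x_k-x_i|$ and $-\nu_j\log|a_j-x_k|$ terms with positive coefficients. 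Hence $L$ is proper and bounded below on this cell and attains a global minimum in the interior, giving \emph{existence} of a critical point in each combinatorial class.

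The main obstacle is \emph{uniqueness}, and this is where I expect to spend the real effort. The strategy is to show that $L$, restricted to the relevant real cell, is a strictly convex function of the real variables $(x_1,\dots,x_n)$, so that its critical point is unique and coincides with the global minimum. I would compute the Hessian of $L$: the diagonal entries involve sums of $1/(x_k-x_i)^2$ and $\nu_j/(a_j-x_k)^2$ with positive signs, while the off-diagonal entries are $-1/(x_k-x_i)^2$. One then recognizes the Hessian as a diagonally dominant symmetric matrix (each row's off-diagonal mass is dominated by the contribution of the fixed-charge terms on the diagonal, which are strictly positive since the $\nu_j>0$), hence positive definite. Strict convexity then forces the critical point in each cell to be unique.

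Assembling the pieces, I would conclude that each of the $\binom{n+p-1}{p-1}$ combinatorial placements carries exactly one equilibrium configuration, that all equilibria are real and of this form, and therefore that the total count is precisely $(n+p-1)!/(n!\,(p-1)!)$ as claimed. The delicate points to verify with care are the reality reduction in the first step and the strict positive-definiteness (as opposed to mere semidefiniteness) of the Hessian, since a degenerate direction would break uniqueness; the positivity of all the fixed charges $\nu_j$ is exactly what guarantees the strict diagonal dominance needed to close the argument.
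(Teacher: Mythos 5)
The paper itself contains no proof of Theorem~\ref{th:StB}: it is quoted as a classical result with the reference \cite{StB}, so your proposal can only be compared with the classical arguments (Stieltjes' original one, and Szeg\H{o}'s in \cite{Sz}, \S 6.7), not with anything in Section~\ref{S3}. Your core argument (your steps 2--4) is correct and is the standard modern route. The placements of $n$ identical charges into the $p$ finite intervals are counted by $\binom{n+p-1}{p-1}=(n+p-1)!/(n!\,(p-1)!)$; each placement, with the charges inside every interval ordered, is an open, bounded, convex polyhedral cell; $L\to+\infty$ at the boundary of the cell (all logarithmic terms are bounded below since the cell is bounded, and at a boundary point at least one term with positive coefficient blows up), so a minimum exists in the interior; and the Hessian computation is right: the $k$-th diagonal entry $\sum_j \nu_j/(x_k-a_j)^2+\sum_{i\neq k}1/(x_k-x_i)^2$ exceeds the sum of moduli of the off-diagonal entries $-1/(x_k-x_i)^2$ by exactly $\sum_j \nu_j/(x_k-a_j)^2>0$, which is precisely where positivity of the fixed charges enters; strict diagonal dominance of a symmetric matrix with positive diagonal gives positive definiteness, hence strict convexity on a convex set, hence uniqueness of the critical point. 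Stieltjes and Szeg\H{o} instead maximize the product $\prod_{j,k}|x_k-a_j|^{\nu_j}\prod_{i<k}|x_k-x_i|$ on the closed cell and argue uniqueness less transparently; your convexity argument buys a cleaner uniqueness proof and identifies the equilibrium as the global minimum of $L$ in its cell.

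Two corrections are needed. First, your opening step --- the claim that every equilibrium in $\bC^n$ is automatically real --- is both unnecessary and not established by your sketch: the theorem only asserts existence and uniqueness of an equilibrium for each real placement, and conjugation symmetry shows at best that a configuration and its conjugate are simultaneously equilibria, not that any individual equilibrium is real. In fact, reality of \emph{all} equilibria is obtained only a posteriori, by comparing Stieltjes' count with Heine's upper bound (Theorem~\ref{th:He}), exactly as the paper remarks after the statement; so this step should simply be deleted rather than ``made rigorous.'' Second, there is a small gap you should fill in the opposite direction: a critical point of the restriction of $L$ to the real cell must be shown to be a critical point of $L$ with respect to \emph{all} real and imaginary parts of the $x_k$, i.e., a genuine equilibrium. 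This is immediate but needs saying: at a real configuration each quantity $\sum_j \nu_j/(x_k-a_j)+\sum_{i\neq k}1/(x_k-x_i)$ is real, its vanishing is forced by criticality along the real directions, and this is exactly the full complex equilibrium condition, the derivatives in the imaginary directions then vanishing automatically (compare the Cauchy--Riemann bookkeeping in the proof of Lemma~\ref{LF}).
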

  
  Heine's major result from \cite{He} claims that in the situation with $p+1$ fixed and $n$ unit movable charges, the number of  equilibrium configurations (assumed finite) can not exceed $(n+p-1)!/(n!(p-1)!)$, see Theorem~\ref{th:He}   below. Therefore, the above Stieltjes' theorem describes all  possible equilibrium configurations occurring under  the assumptions of Theorem~\ref{th:StB}.  
  
  \medskip
The most essential observation of the Heine-Stieltjes theory is that  in case of equal movable charges,   each equilibrium configuration is described by  a polynomial solution of a {\it Lam\'e differential 
equation}. Recall that   a Lam\'e equation (in its algebraic form) is given by 
\begin{equation}
A(x) y^{\prime\prime} + 2 B(x) y^{\prime} + V(x) y = 0,
\label{LameeqN}
\end{equation}
where $A(x)=(x-a_{0})\cdots(x-a_{p})$ and $B(x)$ is a 
polynomial of degree  at most $p$ such that 
\begin{equation}
\frac{B(x)}{A(x)} = \sum_{j=0}^{p} \frac{\nu_j}{x-a_j}
\label{ParfracN}
\end{equation}
and $V(x)$ is a polynomial of degree at most $p-1$.

\medskip 
In this set-up the general Heine-Stieltjes electrostatic problem is equivalent to the  following question about the corresponding  Lam\'e equation:

\medskip
\begin{problem}\label{pr:HS} Given polynomials $A(x)$ and $B(x)$ as above, and a positive integer $n$, find all possible polynomials $V(x)$ of degree at most $p-1$ for which 
equation~\eqref{LameeqN} has a polynomial solution $y$ of degree $n$. 
\end{problem}

Heine's original result was formulated in this language and it claims the following. 

\begin{THEO} [Heine \cite{He}, see also \cite{Boris11_1}]\label{th:He} If the coefficients of the polynomials $A(x)$ and $B(x)$ are algebraically independent numbers, i.e., they do not satisfy an algebraic equation with integer coefficients, 
then for any integer $n > 0,$ there exist exactly $\binom{n+p-1}{n}$  polynomials $V(x)$ of  degree $p-1$ 
 such that the equation \eqref{LameeqN} has a unique (up to a constant factor) polynomial solution $y$ of degree  $n$.
\end{THEO}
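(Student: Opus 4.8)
The plan is to eliminate the Van Vleck polynomial $V$ and reduce the assertion to a count of monic polynomial solutions, which I then carry out by a multihomogeneous B\'ezout computation combined with a genericity argument. First I would observe that for a monic $y$ of degree $n$ all three summands $Ay''$, $2By'$ and $Vy$ in \eqref{LameeqN} have degree at most $n+p-1$. Since $\deg(Ay''+2By')\le n+p-1$, the equation admits a solution of the prescribed form precisely when $y$ divides $Ay''+2By'$; in that case $V:=-(Ay''+2By')/y$ is forced and is automatically a polynomial of degree at most $p-1$. Thus each admissible $y$ determines its $V$ uniquely, and the theorem becomes the statement that, for algebraically independent coefficients, there are exactly $\binom{n+p-1}{n}$ monic $y$ of degree exactly $n$ with $y\mid(Ay''+2By')$, pairwise inequivalent and each giving a distinct $V$.

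Next I would count these pairs projectively. Writing $y=\sum_j c_jx^j$ and $V=\sum_i v_ix^i$, I would homogenize the relation $Ay''+2By'+Vy=0$ by inserting an auxiliary coordinate $v_*$, namely $v_*(Ay''+2By')+Vy=0$, so that each of its $n+p$ coefficient equations (from $x^0$ through $x^{n+p-1}$) becomes bihomogeneous of bidegree $(1,1)$ in the two groups of variables $(c_0,\dots,c_n)\in\mathbb P^{n}$ and $(v_0,\dots,v_{p-1},v_*)\in\mathbb P^{p}$. The multihomogeneous B\'ezout theorem then bounds the number of isolated solutions by the coefficient of $t_1^{n}t_2^{p}$ in $(t_1+t_2)^{n+p}$, that is by $\binom{n+p}{n}$, with equality when the solution set is finite and reduced. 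A short check shows there is no solution with $v_*=0$: such a point would force the polynomial identity $Vy\equiv0$, hence $V\equiv0$, which is excluded in $\mathbb P^{p}$. Consequently every solution lies in the chart $v_*=1$ and corresponds to a genuine divisibility pair $(y,V)$ with $\deg y\le n$.

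Now I would stratify the $\binom{n+p}{n}$ solutions by $m:=\deg y$. A point of $\mathbb P^{n}$ with $\deg y=m$ is exactly a Heine--Stieltjes pair of degree $m$ for the same $A,B$, a notion intrinsic to $m$ and independent of the ambient target degree; hence if $N_m$ denotes the number of solutions with $\deg y=m$ exactly, running the same count with ambient degree $n$ yields $\sum_{m=0}^{n}N_m=\binom{n+p}{n}$ for every $n$. Subtracting the identity for $n-1$ and applying Pascal's rule $\binom{n+p}{n}-\binom{n+p-1}{n-1}=\binom{n+p-1}{n}$ gives $N_n=\binom{n+p-1}{n}$, the desired count; moreover for $n>0$ the leading coefficient $-n(n-1+2\sum_j\nu_j)$ of the associated $V$ is nonzero for generic charges, so each such $V$ has degree exactly $p-1$.

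The main obstacle is to justify that the B\'ezout bound is attained \emph{exactly} and that the projection $(y,V)\mapsto V$ is injective, precisely under the hypothesis of algebraic independence. The locus of coefficients $(A,B)$ for which the solution scheme fails to be finite and reduced, or for which two inequivalent $y$ share one $V$, is contained in a proper Zariski-closed subset $\Sigma$ of the parameter space cut out by the vanishing of suitable resultants and discriminants, all of which are polynomials with integer coefficients in the coefficients of $A$ and $B$. To see that $\Sigma$ is proper I would invoke Theorem~\ref{th:StB}: a configuration of real positive charges produces $\sum_{m=0}^{n}\binom{m+p-1}{m}=\binom{n+p}{n}$ distinct real equilibria, hence that many distinct complex solutions, which together with the upper bound forces all of them to be simple at that parameter value. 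Thus $\Sigma$ does not exhaust the parameter space, and since algebraically independent coefficients satisfy no nontrivial algebraic relation with integer coefficients, they lie outside $\Sigma$. This yields exactly $\binom{n+p-1}{n}$ distinct polynomials $V$, each admitting a one-dimensional space of degree-$n$ polynomial solutions, completing the proof.
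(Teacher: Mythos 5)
First, a caveat: the paper contains no proof of this statement at all --- Theorem~\ref{th:He} is quoted as classical background (with references to Heine and to \cite{Boris11_1}), and the proofs in Section~\ref{S3} concern only Theorems~\ref{ThR} and~\ref{th:Deg} --- so your attempt can only be judged on its own merits. Most of your architecture is sound and is essentially the accepted modern route: the elimination of $V$ via the divisibility $y \mid (Ay''+2By')$, the bihomogeneous B\'ezout bound $\binom{n+p}{n}$ for the $(1,1)$-system on $\mathbb{P}^n\times\mathbb{P}^p$, the check that no solutions lie on $\{v_*=0\}$, the stratification by $\deg y$ combined with Pascal's rule, and the principle that algebraically independent coefficients avoid every proper Zariski-closed locus defined over $\mathbb{Q}$ are all correct.

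The genuine gap is where you establish that the bad locus $\Sigma$ is \emph{proper}, namely the sentence claiming that the Stieltjes configurations, ``together with the upper bound,'' force all solutions to be simple at that parameter value. The multihomogeneous B\'ezout theorem bounds only the number of \emph{isolated} solutions (counted with multiplicity); it places no restriction on points lying on positive-dimensional components of the solution scheme. So ``at least $\binom{n+p}{n}$ distinct points'' plus ``at most $\binom{n+p}{n}$ isolated points'' does not imply finiteness or reducedness: a priori all the Stieltjes points could sit on a curve of solutions, and excluding exactly this possibility is the content of the assertion $(A,B)\notin\Sigma$ that you are trying to verify, so the pigeonhole is circular at the one place where real work is required. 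To close the gap you need two further ingredients: (i) show that each real Stieltjes equilibrium is a \emph{nondegenerate} critical point of $\Log F$ --- at a real configuration with all $\nu_j>0$ the complex Hessian is, up to sign, the real symmetric matrix with diagonal entries $\sum_j \nu_j/(x_k-a_j)^2+\sum_{i\neq k}(x_k-x_i)^{-2}$ and off-diagonal entries $-(x_k-x_i)^{-2}$, which is strictly diagonally dominant (its $k$-th row sums to $\sum_j \nu_j/(x_k-a_j)^2>0$), hence nonsingular --- so these points are isolated and reduced in the scheme; and (ii) invoke a refined B\'ezout inequality (for instance via the Segre embedding, under which your $(1,1)$-equations become hyperplane sections of a variety of degree $\binom{n+p}{n}$), which bounds the total number of irreducible components, isolated or not, by $\binom{n+p}{n}$; the $\binom{n+p}{n}$ isolated reduced Stieltjes points then leave no room for any further component. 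Only with both steps is the scheme at the Stieltjes parameter finite and reduced, hence $\Sigma$ proper; the remainder of your argument (including the distinctness of the resulting $V$'s and the uniqueness of $y$ for each $V$, which should be anchored at the same parameter by a Wronskian argument) then goes through.
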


Polynomial $V(x)$ solving Problem~\ref{pr:HS} is called a {\it Van Vleck polynomial} of the latter problem while the corresponding polynomial solution $y(x)$ is called the {\it Stieltjes polynomial} corresponding to $V(x)$. 

\medskip
The relation between Problem~\ref{pr:basic} and Problem~\ref{pr:HS} in case of equal movable charges is very straightforward. Namely, given the locations and values $(a_j,\nu_j),\; j=0,\dots, p$ of the fixed charges and the number $n$ of the movable unit charges, every equilibrium configuration of the movable charges is exactly the set of all zeros of some Stieltjes polynomial $y(x)$ of degree $n$  for Problem~\ref{pr:HS}. 

\medskip
In the  particular case of  $p=1$, $a_0=-1$, $a_1=1$, $\nu_0=(\beta+1)/2$ and 
$\nu_1=(\alpha+1)/2,$  this interpretation explains why  the unique equilibrium position of $n$ unit movable charges in the interval $(-1,1)$ is determined by the fact that the set 
$\{x_k\}$, $k=1,\ldots,n$ coincides with the zero locus of the Jacobi polynomial $P_{n}^{(\alpha,\beta)}(x)$. (Recall that  $\{P_{n}^{(\alpha,\beta)}(x)\}$ is the sequence of polynomials, orthogonal on $[-1,1]$ with respect to the weight function $(1-x)^{\alpha}(1+x)^{\beta}$.) 




 
\medskip
The main  goal of this note is to find the electrostatic interpretation of the zeros of polynomial solutions of  Problem~\ref{pr:HS}  for a more general class of Lam\'e equations when there is no  restriction $\deg A(x) > \deg B(x)$. 
 Namely, 
we say that a linear second order differential operator 
\begin{equation}\label{Lame}
\dq=A(x)\frac{d^2}{dx^2}+2B(x)\frac{d}{dx},
\end{equation}
with polynomial coefficients $A(x)$ and $B(x)$ is a \emph{non-degenerate Lam\'e operator} if $\deg A(x)>\deg B(x)$ and a \emph{degenerate Lam\'e operator} otherwise. The \emph{Fuchs index} $f_\dq$ of the operator $\dq$ is, by definition, given by 
$$f_\dq:=max(\deg A(x)-2, \deg B(x)-1).$$
For the Heine-Stieltjes problem 
to be well-defined, we consider below Lam\'e ope\-ra\-tors $T$ with $f_\dq\ge 0$.  For such operators, Problem~\ref{pr:HS} with equation~\eqref{LameeqN} makes perfect sense. In other words,   we  are looking for Van Vleck polynomials $V(x)$ of degree at most $f_\dq$ such that \eqref{LameeqN}  has a polynomial solution of a given degree $n$. We call equation \eqref{LameeqN} with a degenerate Lam\'e operator $\dq=A(x)\frac{d^2}{dx^2}+B(x)\frac{d}{dx}$ and $V(x)$ of degree at most $f_\dq$, a \emph{degenerate Lam\'e equation}.

\medskip
The most well-known examples of such equations are those satisfied by the Hermite  and the Laguerre polynomials. Namely,     the Hermite and the Laguerre   polynomials are polynomial solutions of the  second order 
differential equations:

\begin{equation}\label{HDE}
y^{\prime\prime} - 2 x\,  y^{\prime} + 2n\,  y = 0,\ \ \ y(x)=H_n(x),
\end{equation}

\begin{equation}\label{LDE}
x\, y^{\prime\prime} + (\alpha+1-x)\, y^{\prime} + n\,  y = 0,\ \ \ y(x)=L_n^{\alpha}(x).
\end{equation}

\medskip Obviously, \eqref{HDE} and \eqref{LDE} are degenerate Lam\'e equations with Fuchs index $0$. 
Due to this fact,  the classical interpretation of the zeros of the Hermite and the Laguerre polynomials as  coordinates of  the critical points of the energy function \eqref{L} does not apply. Nevertheless, as was already observed  by G.~Szeg\H{o} \cite{Sz},    the zeros of the Hermite  and the Laguerre polynomials still possess a nice electrostatic interpretation   in terms of the minimum of the  energy function \eqref{L}  subject to certain  polynomial constraints.

\begin{THEO}[\cite{Sz}, Theorems 6.7.2 and 6.7.3]\label{HSZ}

{\rm (i)} The zeros of the Hermite polynomial $H_n(c_2 x)$, $c_2=\sqrt{(n-1)/2M},\; M>0$, form an equilibrium configuration of
 $n$ unit charges that obey the constraint $x_1^2 + \cdots + x_n^2  \leq M n$ without  fixed charges. In particular, the zeros of $H_n(x)$  form an equilibrium configuration of
 $n$ unit charges subject to the constraint 
$x_1^2 + \cdots + x_n^2 \leq n(n-1)/2.$ 

\noindent
{\rm (ii)} The zeros of the Laguerre polynomial $L_n^{\alpha}(c_1x)$, $c_1=(n+\alpha)/K,\; K>0$, form an equilibrium configuration of
 $n$ unit movable charges  that obey the additional constraint $x_1 + \cdots + x_n  \leq K n$ in the presence of one fixed charge $\nu_0=(\alpha+1)/2$ placed at the origin. In particular, the zeros of $L_n^{(\alpha)}(x)$  form an equilibrium configuration of $n$ unit charges satisfying the constraint 
$
   x_1 + \cdots + x_n \leq n (n+\alpha)
 $ 
in the  electrostatic field created  by them together with the above fixed charge. 

 \end{THEO}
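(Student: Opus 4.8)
The plan is to treat both statements as constrained minimization problems for the logarithmic energy \eqref{L} and to solve them by Lagrange multipliers, converting the equilibrium equations into a scaled Hermite or Laguerre differential equation by means of the classical identity $\sum_{i\ne k}(x_k-x_i)^{-1}=P''(x_k)/(2P'(x_k))$, valid at a simple zero $x_k$ of $P(x)=\prod_{j=1}^{n}(x-x_j)$.

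For part (i) there are no fixed charges, so $L=-\sum_{i<k}\log|x_k-x_i|$ is to be minimized over the ball $\sum_{k}x_k^2\le Mn$. First I would record that this minimum exists at a configuration with distinct coordinates, since $L$ is continuous on the compact ball away from the diagonals and blows up to $+\infty$ as any two coordinates coalesce. Writing $\partial L/\partial x_k=-\sum_{i\ne k}(x_k-x_i)^{-1}$, the Lagrange condition at an active constraint reads $\sum_{i\ne k}(x_k-x_i)^{-1}=-2\tau x_k$, which the identity above turns into $P''(x_k)+4\tau x_k P'(x_k)=0$ at every zero of $P$. Since $P''+4\tau xP'$ has degree $n$ and vanishes at all $n$ zeros of $P$, comparison of leading coefficients gives $P''+4\tau xP'-4\tau nP=0$. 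Matching this against the equation satisfied by $H_n(\lambda x)$, namely $y''-2\lambda^2 xy'+2n\lambda^2y=0$ obtained by scaling \eqref{HDE}, forces $\tau=-\lambda^2/2$ and $P(x)=H_n(\lambda x)$. Finally I would pin $\lambda$ down from the constraint: the zeros $h_k$ of $H_n$ satisfy $\sum_k h_k^2=n(n-1)/2$, so $\sum_k x_k^2=\lambda^{-2}n(n-1)/2=Mn$ yields $\lambda^2=(n-1)/(2M)$, i.e.\ $\lambda=c_2$; the choice $M=(n-1)/2$ gives $\lambda=1$ and the constraint $\sum_k x_k^2=n(n-1)/2$.

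Part (ii) runs along the same lines with a fixed charge $\nu_0=(\alpha+1)/2$ at the origin and the linear constraint $\sum_k x_k\le Kn$. Now $\partial L/\partial x_k=-\tfrac{\alpha+1}{2x_k}-\sum_{i\ne k}(x_k-x_i)^{-1}$ and the constraint gradient is constant, so the Lagrange condition is $-\tfrac{\alpha+1}{2x_k}-P''(x_k)/(2P'(x_k))=\tau$; clearing denominators gives $x_kP''(x_k)+(\alpha+1+2\tau x_k)P'(x_k)=0$ at each zero. The left-hand polynomial has degree $n$ and vanishes on all zeros of $P$, hence equals $2\tau nP$, so $xP''+(\alpha+1+2\tau x)P'-2\tau nP=0$. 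Comparing with the scaled form of \eqref{LDE}, namely $xy''+(\alpha+1-\mu x)y'+n\mu y=0$ satisfied by $L_n^{\alpha}(\mu x)$, gives $\mu=-2\tau$ and $P(x)=L_n^{\alpha}(\mu x)$. Since the zeros $\ell_k$ of $L_n^{\alpha}$ satisfy $\sum_k\ell_k=n(n+\alpha)$, the constraint $\sum_k x_k=\mu^{-1}n(n+\alpha)=Kn$ fixes $\mu=(n+\alpha)/K=c_1$, with the special case $K=n+\alpha$ giving $\mu=1$ and $\sum_k x_k=n(n+\alpha)$.

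The main obstacle is the variational bookkeeping rather than the algebra: one must check that the Lagrange critical point is the genuine global minimizer and that $\tau$ has the sign making $\lambda^2$ and $\mu$ positive. I would handle this by first observing that the unconstrained energy has no finite critical point, for $\nabla L=0$ would force $P''$ (respectively $xP''+(\alpha+1)P'$) to vanish at all $n$ zeros of $P$ while having degree less than $n$; hence every critical configuration lies on the constraint surface, and the Lagrange equations together with the constraint determine it uniquely. Convexity then confirms minimality: a direct computation yields $v^{\top}(\mathrm{Hess}\,L)\,v=\sum_{k<l}(v_k-v_l)^2/(x_k-x_l)^2$ in the Hermite case, augmented by the strictly positive term $\tfrac{\alpha+1}{2}\sum_k v_k^2/x_k^2$ in the Laguerre case. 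The Laguerre Hessian is positive definite; in the Hermite case the only flat direction is the overall translation $(1,\dots,1)$, along which a centered configuration strictly increases $\sum_k x_k^2$ and thus leaves the feasible ball, so the minimizer is unique in both cases. The required sign of $\tau$ is then forced by the Karush--Kuhn--Tucker condition for the active inequality constraint, since lowering $L$ would require moving outward across the forbidden boundary.
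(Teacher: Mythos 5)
Your proposal is correct in substance, and its algebraic core is exactly the mechanism the paper itself relies on: the paper does not reprove Theorem~\ref{HSZ} (it cites \cite{Sz}), but its own Theorem~\ref{ThR} is proved by precisely your steps --- write the constrained criticality condition with a Lagrange multiplier, convert $\sum_{i\ne k}(x_k-x_i)^{-1}$ into $y''(x_k)/(2y'(x_k))$, clear denominators by multiplying with $A(x_k)y'(x_k)$, and use the degree count plus the fundamental theorem of algebra to promote vanishing at the $n$ zeros to a polynomial identity, which is then matched against the scaled equations \eqref{HDE} and \eqref{LDE}; the paper's Example~1 makes this identification explicit, with $\rho^\ast$ playing the role of your $\tau$. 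What you add beyond the paper is the real-variable variational bookkeeping (existence of the minimizer, the Hessian formula, uniqueness, the KKT sign of the multiplier), which is closer in spirit to Szeg\H{o}'s original proof, whereas the paper works with critical points of the multivalued master function in $\bC^n$ and never addresses minimality (its Remark~2 only asserts positivity of $\rho^\ast$ at local minima). One caveat in your extra material: in the Laguerre case the feasible set $\{\sum_k x_k \le Kn\}$ is \emph{not} compact, and without restricting to positive configurations the energy is unbounded below (send one charge to $-\infty$: every logarithmic term tends to $-\infty$ while the constraint remains satisfied), so "runs along the same lines" fails for the existence step; Szeg\H{o} states the result for nonnegative $x_k$, and with that restriction the feasible region is a compact simplex and your existence/convexity argument goes through. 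This caveat does not affect the equilibrium (critical-point) assertion of the theorem itself, because your Lagrange-to-ODE computation is reversible: evaluating the Laguerre equation at the zeros of $L_n^{\alpha}(c_1x)$ recovers the constrained criticality conditions, which is exactly the second bullet of Theorem~\ref{ThR}.
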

 
\begin{remark} {\rm Due to homogeneity of the problem, one can easily conclude  that it suffices to consider only the case of equality in the above constraints. }     
\end{remark}

From the first glance it seems difficult to relate the  electrostatic problems of Theorem~\ref{HSZ} to the corresponding differential equation since e.g., in the case 
of the Hermite polynomials
$$
B(x)/A(x) = -x
$$
which has no non-trivial partial fraction decomposition.  


\medskip
In order to do this, we restrict ourselves to degenerate Lam\'e equations with all distinct roots of $A(x)$ and non-negative Fuchs index. Given such an equation, consider  the classical electrostatic problem with  all unit movable charges and assume that 
 the positions  $X=(x_1,\ldots,x_n)$ of the movable 
charges are subject to an additional constraint $R(X)=0$ of a special form.
Namely, we say that a rational function $R(X)$ is \emph {$A$-adjusted} if 
\begin{equation}\label{eq:AA}
R(X):=R(x_1,\dots, x_n)= \sum_{k=1}^n {r}(x_k),
\end{equation}
 where $r(x)$ is a fixed univariate rational 
function such that $D(x):=A(x) r^\prime(x)$ is a polynomial.  We will also call the univariate function $r(x)$ satisfying the latter condition \emph {$A$-adjusted}. In particular, if $r(x)$ is an arbitrary polynomial, then \eqref{eq:AA} is automatically $A$-adjusted. 

\medskip   Given an $A$-adjusted rational function $R(X)$, 
set $$\Omega = \{ X\in \bC^n\ :\ R(X)=0 \}$$ and denote by  $\mathcal A$  the hyperplane arrangement in $\bC^n$ with coordinates $(x_1,\dots,x_n)$ consisting of all hyperplanes of the form  $\{x_i=a_j\}$ and $\{x_i=x_\ell\}$ for $i=1,\dots, n$, $j=0, \dots, p$ and $i\neq \ell$. Here $\{a_0,\dots, a_p\}$ is the set of roots of $A(x)$ (assumed pairwise distinct). 

\medskip
Now consider the $1$-parameter family of the Lam\'e differential equations  of the form 
\begin{equation}
\label{MLDE}
A(x) y^{\prime\prime} + (2  B(x) - \rho D(x)) y^{\prime} + V(x) y
= 0,
\end{equation}
depending on a complex-valued parameter $\rho$. We call \eqref{MLDE} the \emph{parametric Lam\'e equation}.

\medskip

Consider an arbitrary degenerate Lam\'e equation \eqref{LameeqN} with all distinct roots of $A(x)$. 
 Given an $A$-adjusted rational function $R(X)$, set $q:=\deg D(x).$

\begin{theorem}[Stieltjes' theorem 
 with an $A$-adjusted constraint] 
\label{ThR}
 In the above notation,  the  energy function $L(x)$ given by \eqref{L} and the parametric Lam\'e equation \eqref{MLDE} satisfy the following:

\medskip
\begin{itemize}
\item Let $X^\ast=(x_1^\ast, \dots, x_n^\ast)$ be a vector  lying in $\bC^n\setminus \mathcal A$.  Assume that $X^\ast$ satisfies the constraint $R(X^\ast)=0$ and is  a critical  point of the energy function $L(X)$. Then there exist a polynomial $V(x)$ of degree  $\max\{p-1,q-1\}$ and a  
constant $\rho^\ast$ such that $$y(x)= (x-x_1^\ast)\cdots (x-x_n^\ast)$$ is a
solution of the parametric Lam\'e equation {\rm (\ref{MLDE})} with $\rho=\rho^\ast$.

\medskip
\item Let $V(x)$ be a polynomial satisfying the condition $deg\, V \leq \max \{ p-1,q-1\}$, and $\rho^\ast$
be a constant for which the parametric Lam\'e equation {\rm (\ref{MLDE})} possesses a
polynomial solution of the form $y(x)=(x-x_1^\ast)\cdots(x-x_n^\ast)$, such that $X^\ast=(x_1^\ast, \dots, x_n^\ast) \in \Omega \setminus \mathcal A$. Then
$$
\left. \frac{\partial L(X)}{\partial x_k} \right|_{X^\ast} - \, \rho^\ast \left.  \frac{\partial
R(X)}{\partial x_k} \right|_{X^\ast} = 0,\ \ k=1,\ldots, n.
$$
\end{itemize}
Here, by definition,   $\left. \frac{\partial R(X)}{\partial x_k}\right|_{X^\ast}:=r^\prime(x_k^\ast)$.  
 \end{theorem}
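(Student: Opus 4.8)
The plan is to recognize the constrained equilibrium conditions as the Lagrange-multiplier equations for $L$ restricted to the hypersurface $\Omega=\{R=0\}$, and then to convert them, via the classical Stieltjes substitution, into the divisibility statement behind \eqref{MLDE}. Since $X^*$ is a critical point of the restriction of $L$ to $\Omega$, the Lagrange rule supplies a scalar $\mu$ with $\partial L/\partial x_k|_{X^*}=\mu\, r'(x_k^*)$ for all $k$. Differentiating \eqref{L} shows that $\partial L/\partial x_k$ is, up to the normalization of the gradient, $-\bigl(\sum_{j=0}^p \nu_j/(x_k^*-a_j)+\sum_{i\neq k}1/(x_k^*-x_i^*)\bigr)$; by the partial-fraction identity \eqref{ParfracN} the first sum equals $B(x_k^*)/A(x_k^*)$, and by $A$-adjustedness $r'(x_k^*)=D(x_k^*)/A(x_k^*)$ with $D$ a bona fide polynomial. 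All of these quantities are finite because $x_k^*\neq a_j$ off the arrangement $\cA$.

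The second step is the Stieltjes substitution. For $y(x)=\prod_{k=1}^n(x-x_k^*)$, whose roots are pairwise distinct since $X^*\notin\cA$, one has the elementary identity
\begin{equation}\label{eq:stielt}
\frac{y''(x_k^*)}{y'(x_k^*)}=2\sum_{i\neq k}\frac{1}{x_k^*-x_i^*},\qquad y'(x_k^*)\neq 0 .
\end{equation}
Inserting \eqref{eq:stielt} into the Lagrange equations and clearing the nonzero factor $A(x_k^*)y'(x_k^*)$, I find that the polynomial
\[
P(x):=A(x)\,y''(x)+\bigl(2B(x)-\rho^*D(x)\bigr)\,y'(x)
\]
vanishes at every root $x_k^*$ of $y$, where $\rho^*$ is a fixed nonzero multiple of the multiplier $\mu$. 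The $A$-adjusted hypothesis enters precisely here: it guarantees that $D=A\,r'$ is a polynomial, so that $P$ is a polynomial and \eqref{MLDE} is a genuine degenerate Lam\'e equation rather than a formal expression with poles.

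Since the $n$ roots of $y$ are simple and $P$ vanishes at all of them, $y$ divides $P$, so $P=-V\,y$ for a uniquely determined polynomial $V$; this is exactly \eqref{MLDE} with $\rho=\rho^*$. A degree count closes the forward direction: from $\deg A=p+1$, $\deg B\le p$ and $\deg D=q$ one gets $\deg P\le n-1+\max\{p,q\}$, hence $\deg V=\deg P-n\le\max\{p-1,q-1\}$. The converse is obtained by reversing the computation: evaluating \eqref{MLDE} at a simple root $x_k^*$ (where $y=0$, while $y'(x_k^*)\neq 0$ and $A(x_k^*)\neq 0$ because $X^*\notin\cA$) and dividing by $A(x_k^*)y'(x_k^*)$ recovers the equilibrium equations, i.e. the asserted identity $\partial L/\partial x_k-\rho^*\,\partial R/\partial x_k=0$; the constraint $R(X^*)=0$ is part of the hypothesis $X^*\in\Omega$.

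I expect no conceptual obstacle, only two points requiring care. The first is the bookkeeping constant relating the Lagrange multiplier $\mu$ of $L|_\Omega$ to the parameter $\rho^*$ of \eqref{MLDE}; because of the factor $2$ in \eqref{eq:stielt} together with the normalization of the gradient of \eqref{L} these differ by a fixed nonzero scalar, which I would absorb so that the single symbol $\rho^*$ serves in both halves of the statement. The second is the degree bound for $V$ and the simultaneous finiteness of every denominator that appears, namely $A(x_k^*)$, $y'(x_k^*)$ and the factors $x_k^*-a_j$; all are controlled by the single hypothesis that $X^*$ lies off $\cA$, which is exactly the locus where the electrostatic and the differential descriptions are both valid.
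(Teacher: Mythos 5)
Your argument is correct, and its computational core coincides with the paper's own proof: the Lagrange system for the constrained energy, the Stieltjes substitution $y''(x_k^\ast)/y'(x_k^\ast)=2\sum_{i\neq k}(x_k^\ast-x_i^\ast)^{-1}$, clearing the nonzero factor $A(x_k^\ast)y'(x_k^\ast)$, divisibility of $P(x)$ by $y(x)$ because the roots are simple off $\cA$, the degree count giving $\deg V\le\max\{p-1,q-1\}$, and the reversibility of the computation which yields the second bullet. The genuine difference is how the first step is justified. You write that ``the Lagrange rule supplies a scalar $\mu$'' for the real-valued function $L$ restricted to $\Omega$; but $\Omega$ has real codimension two, so the classical rule a priori produces \emph{two} real multipliers, attached to $\text{Re}\,R$ and $\text{Im}\,R$. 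That these merge into a single complex scalar multiplying the holomorphic (Wirtinger) derivative $r'(x_k^\ast)$ rests on the Cauchy--Riemann equations, i.e., on the facts that $R$ is holomorphic and $L=-\text{Re}\,\Log F$ is pluriharmonic. This merging is precisely the content the paper isolates in Lemma~\ref{Lagr}, the remark following it, and Lemma~\ref{LF}, where the problem is transferred to the multi-valued master function $F(X)$ and the critical-point equations are written for complex gradients. Your route --- staying with $L$ and Wirtinger derivatives throughout --- is legitimate and somewhat more direct, since it avoids discussing multi-valuedness of $F$ altogether; but the one-complex-multiplier statement should be proved rather than named, as it is the only analytically nontrivial point of the theorem. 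Two further glosses you share with the paper and which are harmless: nonzero bookkeeping constants (the factor $2$, the value $F(X^\ast)\neq 0$) are absorbed into $\rho^\ast$, and $X^\ast$ is implicitly assumed to be a regular point of $\Omega$ (not all $r'(x_k^\ast)$ vanish), which the necessity direction of any Lagrange-type argument requires.
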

 
 \medskip
\begin{remark}
{\rm  Additionally, if all fixed charges are positive and placed on the real line, and $X^\ast$ is a point of a local minimum of $L(X)$, then the constant $\rho^\ast$ must be positive. }
 \end{remark}
 
 \begin{example}{\rm 
 Observe  that the above differential equations for the Hermite and Laguerre polynomials are nothing else but the parametric Lam\'e 
 equations for the appropriate values  of parameter $\rho$ and they adequately describe  the corresponding electrostatic problems.  (The corresponding values of $\rho$ are denoted by $\rho^\ast$.)
 
\medskip 
Namely,  the Hermite polynomial $H_n(x)$ is a solution of the  differential equation (\ref{HDE}) which can be interpreted as a   parametric Lam\'e equation \eqref{MLDE} with  
$A(x)=1$, $B(x)=0$, $V(x)=2n$, $D(x)=2x$, and $\rho^\ast=1/2$. The fact that $A(x)=1$ and $B(x)=0$ is equivalent to the absence  
of fixed charges. In this case,    $R(X)=x_1^2+ \cdots + x_n^2 -(n-1)/2n$ which implies that 
$$
D(x_k) := A(x_k) \frac{\partial R(X)}{\partial x_k} =A(x_k)r^\prime(x_k)= 2 x_k,
$$
where $r(x)=x^2-(n-1)/2n^2$. 

Analogously, the differential equation (\ref{LDE}) satisfied by the Laguerre polynomial $L_n^{(\alpha)}(x)$ can be interpreted as a 
parametric Lam\'e equation  \eqref{MLDE} with $A(x)=x$, $B(x) = (\alpha+1)/2$, $D(x)=1$, and $\rho^\ast=1/2$. Then the partial fraction decomposition
$B(x)/A(x)= (\alpha+1)/2x$ indicates the presence of one fixed charge $(\alpha+1)/2$ at the origin.   In this case, 
$R(X)=x_1 + \cdots + x_n -(n+\alpha)/n$ and} 
$$
D(x_k) := A(x_k) \frac{\partial R(X)}{\partial x_k} =A(x_k)r^\prime(x_k)= 1, 
$$
where $r(x)=x- (n+\alpha)/n^2$.
\end{example}

\medskip
Theorem~\ref{ThR} allows us to formulate the main result of this note which provides  a general  relation between  degenerate Lam\'e equations and electrostatic problems  
in the presence of an $A$-adjusted rational constraint.   
\begin{theorem}
\label{th:Deg}
Let  
\begin{equation}
\label{DLDE}
A(x) y^{\prime\prime} + 2{B}(x)  y^{\prime} + V(x) y
= 0,
\end{equation}
be a degenerate Lam\'e equation, i.e., $\deg A(x)\leq \deg {B}(x)$ and $\deg V \le f_\dq$ for $\dq= A(x) y^{\prime\prime} + 2 {B}(x)  y^{\prime}$. 
Assume that all roots of $A(x)$ are distinct and that $r(x)$ is an $A$-adjusted univariate rational function  such that $\deg (B(x)-A(x) r^\prime(x))<\deg A(x)$.   
Set 
$$
B (x):=A(x) r^\prime(x) + \tilde B(x), \quad\quad 
R(X)=R(x_1,\dots, x_n) := \rho + \sum_{k=1}^n r(x_k),$$
where $\rho$ is an arbitrary  complex constant.  Then  there exists a  value $\rho^\ast$ of the constant $\rho$,  for which  the degenerate Lam\'e equation (\ref{DLDE}) 
coincides with the  parametric Lam\'e equation  corresponding to the electrostatic problem  of Theorem \ref{ThR} with fixed 
charges determined by the partial fraction decomposition 
of $\tilde{B}(x)/A(x)$ and a polynomial constraint of the form $R(X)=0$. 
\end{theorem}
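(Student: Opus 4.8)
The plan is to exhibit the given degenerate equation as a single member of the parametric family of Theorem~\ref{ThR} and then to spend the one remaining degree of freedom — the additive constant in the constraint — to force the zeros of its polynomial solution onto the constraint surface. First I would fix a polynomial solution $y(x)=(x-x_1^\ast)\cdots(x-x_n^\ast)$ of degree $n$ of \eqref{DLDE}, i.e. the Stieltjes polynomial attached to the given Van Vleck polynomial $V$, and work in the generic situation in which the $x_k^\ast$ are pairwise distinct and none of them is a root of $A(x)$, so that $X^\ast=(x_1^\ast,\dots,x_n^\ast)\in\bC^n\setminus\mathcal A$. The prescribed decomposition $B(x)=A(x)r^\prime(x)+\tilde B(x)$ with $\deg\tilde B<\deg A$ is the structural heart of the proof: since $\deg B\ge\deg A$, polynomial division pins down $r^\prime$ as the quotient of $B$ by $A$ and $\tilde B$ as the remainder, thereby separating the degenerate first-order coefficient into a genuinely non-degenerate charge part $\tilde B$ — whose partial fraction expansion $\tilde B/A=\sum_{j=0}^{p}\nu_j/(x-a_j)$ prescribes the fixed charges $\nu_j$ at the points $a_j$ — and a constraint part $D(x)=A(x)r^\prime(x)$.

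Next I would rewrite \eqref{DLDE}, using $2B=2\tilde B+2D$, as
\begin{equation*}
A(x)\,y^{\prime\prime}+\bigl(2\tilde B(x)+2D(x)\bigr)\,y^{\prime}+V(x)\,y=0,
\end{equation*}
and compare it with the parametric equation \eqref{MLDE} assembled from $\tilde B$ and $D$, namely $A\,y^{\prime\prime}+(2\tilde B-\rho D)\,y^{\prime}+V\,y=0$. Matching first-order coefficients forces the internal parameter to equal $-2$, while the second- and zeroth-order coefficients agree automatically; thus \eqref{DLDE} is literally the parametric Lam\'e equation for the charges $\nu_j$, taken at parameter value $-2$ and with Van Vleck polynomial $V$. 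A brief bookkeeping check verifies the degree hypothesis of Theorem~\ref{ThR}: in the degenerate regime $\deg\tilde B<\deg A\le\deg B$ gives $q=\deg D=\deg B$, so $q-1=\deg B-1=f_\dq$ and $\max\{p-1,q-1\}=f_\dq$, whence the standing assumption $\deg V\le f_\dq$ is exactly what that theorem requires.

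It remains to pin down the constant $\rho^\ast$ of the statement, which is the additive constant in $R(X)=\rho+\sum_k r(x_k)$ and must be kept distinct from the internal parameter $-2$ found above. I would simply set
\begin{equation*}
\rho^\ast:=-\sum_{k=1}^{n}r(x_k^\ast),
\end{equation*}
so that $R(X^\ast)=0$ and hence $X^\ast\in\Omega\setminus\mathcal A$. Every hypothesis of the second part of Theorem~\ref{ThR} is now met: $V$ is admissible, the parameter value $-2$ produces the polynomial solution $y$, and its zero vector $X^\ast$ lies in $\Omega\setminus\mathcal A$. That theorem then yields
\begin{equation*}
\left.\frac{\partial L(X)}{\partial x_k}\right|_{X^\ast}+2\left.\frac{\partial R(X)}{\partial x_k}\right|_{X^\ast}=0,\qquad k=1,\dots,n,
\end{equation*}
so that $X^\ast$ is a constrained equilibrium of the charges prescribed by $\tilde B/A$ under the constraint $R(X)=0$, and the parametric Lam\'e equation attached to this equilibrium is precisely \eqref{DLDE}.

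The step I expect to be the real obstacle is the regularity of the zero set, not any of the algebra above: Theorem~\ref{ThR} is stated for $X^\ast\in\Omega\setminus\mathcal A$, so one must guarantee that the polynomial solution of \eqref{DLDE} is squarefree and coprime to $A$, i.e. that $X^\ast$ avoids every hyperplane $\{x_i=x_\ell\}$ and $\{x_i=a_j\}$. For the classical degenerate instances (Hermite, Laguerre, and the relativistic families mentioned in the introduction) this is known, but in general it demands either a separate argument that $\gcd(y,A)=1$ and $y$ has only simple zeros, or a restriction to the generic stratum. A final point to keep track of is purely one of normalization: the sign of the internal parameter records the orientation of $r$ fixed by the division, so that recovering the energy-minimum reading of the Szeg\H{o} examples (and the positivity asserted in the Remark following Theorem~\ref{ThR}) corresponds to replacing $r$ by $-r$, which flips both $D$ and that parameter.
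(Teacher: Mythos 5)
Your proposal is correct and follows essentially the same route as the paper's proof: decompose $B(x)=A(x)r^\prime(x)+\tilde B(x)$, recognize \eqref{DLDE} as the parametric Lam\'e equation \eqref{MLDE} built from the fixed charges of $\tilde B(x)/A(x)$ and $D(x)=A(x)r^\prime(x)$, fix the additive constant $\rho^\ast=-\sum_{k}r(x_k^\ast)$ so that the zeros of the polynomial solution satisfy $R(X^\ast)=0$, and invoke the second part of Theorem~\ref{ThR}. Your additional bookkeeping (the explicit value of the internal parameter, the degree check $\max\{p-1,q-1\}=f_\dq$, and flagging that $X^\ast\in\Omega\setminus\mathcal A$ is a genericity hypothesis) only makes explicit what the paper's terse proof leaves implicit.
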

 
 \medskip

Theorem \ref{ThR}, and especially Theorem \ref{th:Deg}, together with the classical relation between nondegerate Lam\'e equations and electrostatics, reveal 
a rather general phenomenon. Namely, every Lam\'e differential equation (\ref{LameeqN}) 
where  $A(x)$ has  distinct complex zeros is related to an electrostatic problem no matter what the degree of the polynomial $B(x)$ is, provided only that the Fuch index is nonnegative.
Indeed,  dividing $B(x)$ by $A(x)$ we obtain $B (x)=A(x) r^\prime(x) + \tilde B(x)$. In the classical non-degenerate case when $deg\, B < deg\, A$,  we get $r^\prime(x) =0$ and the partial fraction decomposition of $B(x)/A(x)$ determines the positions and the strengths of all fixed charges. In the degenerate case, the $A$-adjusted   function $r(x)$, satisfying the assumptions of Theorem~\ref{th:Deg}, is simply a primitive of the quotient $r^\prime(x)$. In other words, $r(x)$ is a unique, up to an additive constant, polynomial  with the above properties.  

However, in some situations $r(x)$ is a rational function and not just a polynomial, see Section \ref{S3.3} below. The value $\rho^\ast$ of the constant $\rho$ is uniquely determined by the fact that the zeros $x_1^\ast,\ldots, x_n^\ast$ of a polynomial solution $y(x)$ must satisfy the constraint   
$\sum_{k=1}^n r(x_k^\ast) + \rho^\ast = 0$. Usually $\rho^\ast$ is easily obtained by comparing the coefficients of certain powers of $x$ in the corresponding Lam\'e equation. 
The partial fraction decomposition of  $\tilde B(x)/A(X)$, where $\tilde B(x)$ is the remainder in the above presentation of $B(x)$, determines the fixed charges, if any.

\medskip\noindent
{\it Acknowledgements. } The authors are sincerely grateful to the anonymous referees for their careful reading of the initial version of the manuscript and helpful suggestions. The second  author is sincerely grateful to Universidade Esta\-dual Paulista for the hospitality and excellent research atmosphere during his  visit to S\~{a}o Jos\'{e}  do Rio Preto in July 2017.

\begin{center}
\section{Proofs}\label{S3}
\end{center}

 Consider the  multi-valued analytic function
\begin{equation}\label{extra}
F(X)= \prod_{j=0}^{p} \prod_{i=1}^{n} (x_i-a_j)^{\nu_j} \ \prod_{1\leq i < j \leq n}  (x_j-x_i).
\end{equation}
$F(X)$ is well-defined as a multi-valued function at least in $\bC^n\setminus \mathcal A$ and also on some part of $\mathcal A$ where it vanishes.
 (This function and its generalizations called the {\it master functions} were thoroughly studied  by A.~Varchenko and his coauthors in a large number of publications including  \cite{Va, Va2}.) Although $F(x)$ is multi-valued (unless all $\nu_j$'s are integers), its absolute value  
$$
H(X) := \vert F(X)\vert = \prod_{j=0}^{p} \prod_{i=1}^{n} |x_i-a_j|^{\nu_j} \ \prod_{1\leq i < j \leq n} \vert x_j-x_i\vert.
$$
 is a uni-valued function in $\bC^n\setminus \mathcal A$. Obviously, the energy function \eqref{L} satifies the relation 
$$L(X)=-\log H(x)=-\log \vert F(X) \vert$$
which implies that $L(X)$ is a well-defined  pluriharmonic function in $\bC^n\setminus \mathcal A$, see e.g. \cite{GuRo}. (If all $\nu_j$'s are positive, then $L(x)$ is plurisuperharmonic function in $\bC^n$.) 

\medskip
Although $F(X)$ is multi-valued, its critical points in $\bC^n\setminus \mathcal A$ are given by a well-defined system of algebraic equations and are typically finitely many due to the fact that the ratio of any two branches is constant. (For degenerate cases, these critical points can form subvarieties of positive dimension.)  The following general fact is straightforward.   

\medskip
\begin{lem}\label{VaL2} Let $f_j : \bC^k\to \bC$, $j=1,\dots, N$ be pairwise distinct linear polynomials and let $(y_1,\dots, y_k)$ be coordinates in $\bC^k$.  For every $j$, denote by $H_j\subset \bC^k$ the hyperplane given by $\{f_j=0\}$ and  set 
$T=\bC^k\setminus \bigcup_{j=1}^N H_j.$
Given    a collection of complex numbers $\Lambda=\{\la_j\}_{j=1}^N$, define 
$$\Phi_\Lambda(y_1,\dots, y_k)=\prod_{j=1}^N f_j^{\la_j} .$$
($\Phi_\Lambda$ is a multi-valued holomorphic function defined in $T$.)  
Then the system of  equations defining the critical points of $\Phi_\Lambda$ in $T$ is given by:  
$$\sum_{j=1}^N\la_j\frac{\partial f_j}{\partial y_{\ell}}/f_j=0, \quad \ell=1,\dots, k.   $$
\end{lem}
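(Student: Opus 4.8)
The plan is to reduce the critical-point system of the multi-valued function $\Phi_\Lambda$ to a single-valued logarithmic-derivative identity. Since each hyperplane $H_j$ has been deleted, none of the linear forms $f_j$ vanishes on $T$, so every local branch of $\Phi_\Lambda$ is a nowhere-zero holomorphic function there. First I would note that, although $\Phi_\Lambda$ is multi-valued, its critical points in $T$ are nevertheless unambiguously defined: analytic continuation around any loop multiplies a branch by a nonzero constant (a monodromy factor $\prod_j e^{2\pi i m_j \la_j}$, where $m_j$ is the winding number of $f_j$ about its zero), so any two determinations of $\Phi_\Lambda$ at a point differ by a nonzero multiplicative constant, and their gradients differ by that same constant. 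Hence the locus $\{\partial \Phi_\Lambda/\partial y_\ell=0,\ \ell=1,\dots,k\}$ does not depend on the choice of branch, and it suffices to work with one arbitrary local determination.

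Next, on any simply connected open subset of $T$ I would fix such a branch and pass to its logarithm,
$$\Log \Phi_\Lambda=\sum_{j=1}^N \la_j \Log f_j,$$
which becomes single-valued once branches of the individual $\Log f_j$ are chosen. Differentiating term by term yields, for each $\ell=1,\dots,k$,
$$\frac{\partial}{\partial y_\ell}\Log \Phi_\Lambda=\sum_{j=1}^N \la_j\,\frac{\partial f_j}{\partial y_\ell}/f_j=\frac{1}{\Phi_\Lambda}\,\frac{\partial \Phi_\Lambda}{\partial y_\ell}.$$
The middle expression depends only on the ratios $\frac{\partial f_j}{\partial y_\ell}/f_j$, which are genuinely single-valued rational functions on $T$; this reconfirms that the object under study is branch-independent.

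Finally, since $\Phi_\Lambda$ is nowhere zero on $T$, the factor $1/\Phi_\Lambda$ never vanishes, so for a point of $T$ the equation $\partial \Phi_\Lambda/\partial y_\ell=0$ holds precisely when $\sum_{j=1}^N \la_j\,\frac{\partial f_j}{\partial y_\ell}/f_j=0$. Imposing this for every $\ell$ produces exactly the asserted system, which completes the argument. The hard part, such as it is, is not computational but conceptual: one must make sure that ``critical point of a multi-valued function'' is a meaningful notion, and this is settled entirely by the constant-ratio-of-branches observation in the first step.
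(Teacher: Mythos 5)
Your proof is correct and follows exactly the route the paper implicitly takes: the paper states the lemma as ``straightforward,'' relying on its preceding observation that the ratio of any two branches is a nonvanishing constant, which is precisely your branch-independence step, followed by the standard logarithmic-derivative computation. Nothing in your argument deviates from or adds beyond what the paper intends, so it can stand as the written-out version of the paper's proof.
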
 

(Note that by a critical point of a  holomorphic function we mean a point where its complex gradient vanishes.) Similarly, for any algebraic hypersurface $Y\subset \bC^k$, the critical points of the restriction of $\Phi_\Lambda$ to $Y\cap T$ are well-defined  independently of its branch and can be found by using the complex version of the  method of Lagrange multipliers.  
 
 \begin{lem}\label{Lagr}
 In the notation of Lemma~\ref{VaL2}, let $Y\subset \bC^k$ be an algebraic hypersurface given by $Q(y_1,\dots, y_k)=0$. Then the critical points of the restriction of $\Phi_\Lambda$ to $Y\cap T$ are given by the condition that the gradient of the Lagrange  function $$\mathcal L(y_1,\dots, y_k, \rho)=\Phi_\Lambda - \rho Q(y_1,\dots, y_k)$$ vanishes, where $\rho$ is a complex parameter. The latter condition is given by the system of equations
 \begin{equation}\label{eq:Lagr}
 \sum_{j=1}^N\la_j\frac{\partial f_j}{\partial y_{\ell}}/f_j=\rho \frac{\partial Q}{\partial y_{\ell}}, \quad \ell=1,\dots, k, \quad \text{and} \quad Q(y_1,\dots, y_k)=0.  
 \end{equation}
 (More exactly, if $(y_1^\ast,\dots, y_k^\ast, \rho^\ast)$ solves \eqref{eq:Lagr}, then $(y_1^\ast,\dots, y_k^\ast)$ is a critical point of $\Phi_\Lambda$ restricted to $Y\cap  T$.)
 \end{lem}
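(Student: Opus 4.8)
The plan is to reduce the constrained problem to the standard complex Lagrange multiplier criterion, taking as the basic object not the multivalued function $\Phi_\Lambda$ itself but its logarithmic differential, which \emph{is} single valued. Concretely, on $T$ one has the globally defined holomorphic $1$-form
\[
d\log\Phi_\Lambda=\sum_{j=1}^N\la_j\frac{df_j}{f_j},
\]
whose coefficients are exactly the left-hand sides appearing in Lemma~\ref{VaL2}. Since $\Phi_\Lambda$ is nowhere zero on $T$, every local branch satisfies $d\Phi_\Lambda=\Phi_\Lambda\,d\log\Phi_\Lambda$, and passing to another branch multiplies $d\Phi_\Lambda$ by a nonzero constant. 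Hence the locus where $d\Phi_\Lambda$ vanishes, and more generally the locus where its restriction to a fixed subvariety vanishes, is independent of the chosen branch. This is precisely what makes the notion of a critical point of $\Phi_\Lambda|_{Y\cap T}$ well defined, and it is the first point I would make explicit.

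First I would restrict attention to a smooth point $y^\ast\in Y\cap T$, that is, a point with $dQ|_{y^\ast}\neq 0$, so that the holomorphic tangent space is $T_{y^\ast}Y=\ker\big(dQ|_{y^\ast}\big)$. By definition $y^\ast$ is a critical point of $\Phi_\Lambda|_{Y\cap T}$ exactly when the restriction of $d\Phi_\Lambda|_{y^\ast}$ to this tangent hyperplane vanishes. The elementary fact over $\bC$ that a covector annihilates the kernel of a nonzero covector precisely when the two are proportional then yields a scalar $\sigma$ with $d\Phi_\Lambda|_{y^\ast}=\sigma\,dQ|_{y^\ast}$. Substituting $d\Phi_\Lambda=\Phi_\Lambda\,d\log\Phi_\Lambda$ and dividing by the nonzero value $\Phi_\Lambda(y^\ast)$ gives $d\log\Phi_\Lambda|_{y^\ast}=\rho\,dQ|_{y^\ast}$ with $\rho=\sigma/\Phi_\Lambda(y^\ast)$, which in coordinates is exactly the first block of equations in \eqref{eq:Lagr}, the second block being the membership condition $Q(y^\ast)=0$. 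Conversely, any solution $(y_1^\ast,\dots,y_k^\ast,\rho^\ast)$ of \eqref{eq:Lagr} places $y^\ast$ on $Y\cap T$ and makes $d\log\Phi_\Lambda|_{y^\ast}$, hence $d\Phi_\Lambda|_{y^\ast}$, proportional to $dQ|_{y^\ast}$, so its restriction to $\ker\big(dQ|_{y^\ast}\big)$ vanishes; this is the direction emphasized in the parenthetical remark. Reading the same computation off the Lagrange function $\mathcal L=\Phi_\Lambda-\rho Q$ reproduces these equations up to the harmless rescaling of the multiplier by the nonzero factor $\Phi_\Lambda$, which I would note matches the formulation in the statement.

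The main obstacle, and the one place where care is genuinely needed, is exactly this bookkeeping of the multiplier together with the branch issue: because $\Phi_\Lambda$ is multivalued and nonvanishing, the ``exponential'' multiplier attached to $\mathcal L=\Phi_\Lambda-\rho Q$ and the ``logarithmic'' multiplier appearing in \eqref{eq:Lagr} differ by the factor $\Phi_\Lambda$, so the equivalence must be phrased as the \emph{existence} of some multiplier rather than as the equality of a prescribed one. I would also flag that the clean tangent-space description is available only at smooth points of $Y$; where $dQ$ vanishes the proportionality criterion degenerates and must be addressed separately, although in the application of interest $Q$ arises from the constraint $R(X)=0$ built from the $A$-adjusted function of \eqref{eq:AA}, whose singular locus plays no role for the configurations under study. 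Everything else is the routine translation of the coordinate expressions, inherited directly from Lemma~\ref{VaL2}.
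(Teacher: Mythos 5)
Your proof is correct and follows essentially the same route as the paper's: reduce to the standard complex Lagrange-multiplier criterion (proportionality of $d\Phi_\Lambda$ and $dQ$ at smooth points of $Y$) and observe that multivaluedness is harmless because any two branches of $\Phi_\Lambda$ differ by a nonzero constant factor, so the proportionality condition is branch-independent. Your two refinements --- phrasing the branch-independence via the single-valued form $d\log\Phi_\Lambda=\sum_{j=1}^N\lambda_j\,df_j/f_j$, and noting that the multiplier attached to $\mathcal L=\Phi_\Lambda-\rho Q$ differs from the one in \eqref{eq:Lagr} by the nonvanishing factor $\Phi_\Lambda(y^\ast)$ --- only make explicit what the paper treats as obvious, and the latter usefully reconciles the two formulations appearing in the statement itself.
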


\begin{proof} If $\Phi_\Lambda$ were a well-defined holomorphic function defined in $T$ and $Y$ were as above, then the claim of Lemma~\ref{Lagr} has been  obvious because in order  to obtain the critical points of the restriction of a holomorphic function to an algebraic hypersurface one has to find all points on the hypersurface where the complex gradients of the function under consideration and the hypersurface are proportional. 


In our situation, however, $\Phi_\Lambda$ is multi-valued, but the ratio of any two branches is a non-vanishing constant. This implies that at any point of $T$,  the (complex) gradients of any two branches are proportional to each other with a non-vanishing  constant of proportionality independent of the point of consideration.  Therefore, if at some point of $T$ he complex gradient of some branch of $\Phi_\Lambda$ is proportional to that of $Q$,   the same holds at the same point for any other branch of $\Phi_\Lambda$ with (possibly) different constant of proportionality.  The latter observation means that although there are typically infinitely many solutions of \eqref{eq:Lagr} in the variables $(y_1,\dots, y_k, \rho)$, there are only finitely many projections of these solutions to the space $T$, obtained by  forgetting the value of the Lagrange multiplier $\rho$. 
\end{proof}

\begin{remark}{\rm 
Observe that $Y$ has real codimension $2$ and  the equation of proportionality of complex gradients is, in fact, a system of two real equations; the real and the imaginary parts of the proportionality constant can be thought of as two real Lagrange multipliers corresponding to two constraints which express the vanishing of the real and imaginary parts of the polynomial $Q$ defining the hypersurface $Y$. 
 Additionally, the real part and imaginary parts of $\Phi_\Lambda$ are conjugated pluriharmonic functions and therefore have the same set of critical points.} 
 \end{remark}

\medskip
\begin{lem}\label{LF} Under the above assumptions, 
the set of critical points of $L(X)$ in $\bC \setminus \mathcal A$ as well as  the set of critical points of the restriction of $L(X)$ to any $Y\setminus \mathcal A$, where $Y$ is an algebraic hypersurface in $\bC^n$ coincides with those of $F(X)$. 
\end{lem}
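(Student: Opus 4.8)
The plan is to reduce the whole statement to the analytic dictionary between the \emph{real} gradient of a pluriharmonic function and the \emph{complex} gradient of the holomorphic function whose real part it is. The starting point is the identity, valid on $\bC^n\setminus\cA$,
$$
L(X)=-\log H(X)=-\log\vert F(X)\vert=-\operatorname{Re}\Log F(X),
\qquad
\Log F=\sum_{j=0}^{p}\sum_{i=1}^{n}\nu_j\Log(x_i-a_j)+\sum_{1\le i<\ell\le n}\Log(x_\ell-x_i),
$$
where $\Log F$ is a local branch of the logarithm of the master function \eqref{extra}. Although $\Log F$ is multivalued, two branches differ by an additive constant (an integer combination of the $2\pi i\,\nu_j$ and of $2\pi i$), so its differential $d\,\Log F$ is single-valued on $\bC^n\setminus\cA$, consistently with Lemma~\ref{VaL2}. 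This single-valuedness is what I will lean on at the end.

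First I would record the elementary Cauchy--Riemann computation that drives everything. Writing $x_k=u_k+iv_k$ and letting $g$ be any holomorphic function, one has
$$
\frac{\partial\operatorname{Re}g}{\partial u_k}=\operatorname{Re}\frac{\partial g}{\partial x_k},
\qquad
\frac{\partial\operatorname{Re}g}{\partial v_k}=-\operatorname{Im}\frac{\partial g}{\partial x_k}.
$$
Hence the real gradient of $\operatorname{Re}g$ with respect to the $2n$ real coordinates $u_k,v_k$ vanishes at a point precisely when the complex gradient $(\partial g/\partial x_1,\dots,\partial g/\partial x_n)$ vanishes there. Applying this with $g=\Log F$ shows that the critical points of $L=-\operatorname{Re}\Log F$ coincide with the zeros of the complex gradient of $\Log F$. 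Since $F$ does not vanish on $\bC^n\setminus\cA$, we have $\partial\,\Log F/\partial x_k=(\partial F/\partial x_k)/F$, so these are exactly the points where $\grad F=0$. This settles the ambient case.

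For the restriction to $Y\setminus\cA$ I would use that $Y$ is a \emph{complex} hypersurface, so that at each point its real tangent space $T_pY$ (of real dimension $2n-2$) is a complex-linear subspace of $\bC^n$, i.e.\ closed under multiplication by $i$. A point is a critical point of $L\vert_Y$ iff $d(\operatorname{Re}\Log F)$ annihilates $T_pY$; but for $w\in T_pY$ one also has $iw\in T_pY$, and the formulas above give $\operatorname{Re}\big(d\,\Log F(w)\big)=\operatorname{Re}\big(d\,\Log F(iw)\big)=0$, which forces $d\,\Log F(w)=0$ for all $w\in T_pY$. Thus the real tangential criticality of $\operatorname{Re}\Log F$ on $Y$ is equivalent to the complex differential $d\,\Log F$ annihilating $T_pY=\ker dQ_p$, i.e.\ to the proportionality $\grad\,\Log F=\rho\,\grad Q$ of Lemma~\ref{Lagr}. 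As in the Remark following that lemma, the complex constant $\rho$ packages the two real Lagrange multipliers attached to $\operatorname{Re}Q$ and $\operatorname{Im}Q$. Passing from $\Log F$ to $F$ once more via $F\neq0$ then identifies these with the critical points of $F\vert_{Y\setminus\cA}$.

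I expect the only genuinely delicate step to be this last equivalence on $Y$: one must check that the real Lagrange problem on the $(2n-2)$-dimensional manifold underlying $Y$ reproduces \emph{exactly} the complex proportionality condition of Lemma~\ref{Lagr}, with neither loss nor extraneous solutions. The crux is precisely the complex-linearity of $T_pY$, which upgrades the single real orthogonality relation into the full complex condition; and it is here that one must invoke the single-valuedness of $d\,\Log F$ so that the argument is independent of the chosen branch and, via $\grad\,\Log F=(\grad F)/F$ with $F\neq0$, transfers cleanly to $F$ itself.
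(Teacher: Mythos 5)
Your proposal is correct and follows essentially the same route as the paper: the identity $L(X)=-\operatorname{Re}\Log F(X)$, the Cauchy--Riemann dictionary equating the vanishing of the real gradient of $L$ with that of the complex gradient of $\Log F$ (hence of $F$, since $F\neq 0$ on $\bC^n\setminus\cA$), and, for the hypersurface case, the complex-linearity of the tangent space $T_pY$ --- which the paper implements with adapted coordinates $(\tilde x_1,\dots,\tilde x_n)$, where your multiplication-by-$i$ argument appears as the equivalence of the $2n-2$ real conditions with the $n-1$ complex ones. The only point you omit is the paper's one-line treatment of singular points of $Y$, at which both $L$ and $\Log F$ are regarded as critical by convention; this is a definitional matter rather than a gap in the analytic argument.
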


Notice that in Lemma~\ref{LF}, the meaning of a \emph{critical point} of the real-valued function $L(X)$ and the meaning of a \emph{critical point} the multi-valued holomorphic  function $F(X)$ are different. In the former case we require that the \emph{real} gradient of $L(X)$ with respect to the real and imaginary parts of the complex coordinates vanishes while in the latter case we require that the complex gradient of $F(X)$ vanishes. Lemma~\ref{LF} holds due to the fact that $L(X)$ is a pluriharmonic function closely related to $F(X)$.

\begin{proof} 
As a warm-up exercise let us prove that the sets of critical points of $L(X)$ and $F(X)$ in $\bC^n\setminus \mathcal A$ coincide. Notice that $F(X)$ is non-vanishing in $\bC^n\setminus \mathcal A$ which means that no branch of $F(X)$ vanishes there. As was mentioned above, 
$$L(X)=-\log \vert F(X)\vert=-\text{Re}\,\left( \Log F(X)\right),$$
where $\Log F(X)$ is a multi-valued holomorphic logarithm function in $\bC^n\setminus \mathcal A$ which is well-defined due to the fact that $F(X)$ is non-vanishing. Similarly to the case of $F(X)$, vanishing of the  complex gradient of $\Log F(X)$ is given by a well-defined system of algebraic equations which coincides with that for $F(X)$.   
  
  Let us now express the real gradient of $L(X)$ using the complex gradient of $\Log F(X)$.  Consider the decomposition of the complex variable $x_k$ into its real and imaginary parts, i.e.   $x_k=u_k+I\cdot v_k$, where $I=\sqrt{-1}$. 

Recall also that due to the Cauchy-Riemann equations, for any (locally) holomorphic function $W(x_1,\dots, x_n)=U(x_1,\dots, x_n)+I\cdot V(x_1,\dots, x_n)$, one has
$$\frac{\partial W}{\partial x_k}=2\frac{\partial U}{\partial x_k}=2I \frac{\partial V}{\partial x_k}, \quad \frac{\partial W}{\partial \bar x_k}=\frac{\partial U}{\partial \bar x_k}=\frac{\partial V}{\partial \bar x_k}=0, \quad k=1,\dots, n.$$
The real gradient $\grad_\bR L$ is given by 
$$\grad_\bR L=\left(\frac{\partial L}{\partial u_1},\frac{\partial L}{\partial v_1},\dots, \frac{\partial L}{\partial u_n},  \frac{\partial L}{\partial v_n}  \right)=-\grad_\bR \text{Re}\,\left( \Log F(X)\right). 
$$
Denoting $G:= - \Log F(X)$ and using the latter relations, we get
\begin{equation}\label{gradR}
\grad_\bR L=\left( -\text{Re}\left( \frac{\partial G}{\partial x_1}\right),\text{Im} \left(\frac{\partial G}{\partial x_1}\right),\dots, -\text{Re}\left(\frac{\partial G}{\partial x_n}\right),\text{Im} \left(\frac{\partial G}{\partial x_n}  \right)     \right), 
\end{equation}
see \cite{KoWi}, p.~3. Observe that $$\text{Re}\left( \frac{\partial G}{\partial x_k}\right)+I \cdot \text{Im} \left(\frac{\partial G}{\partial x_k}\right)=\frac{\partial G}{\partial x_k}=-2  \frac{\partial L}{\partial x_k},$$
implying that
$$\text{Re}\left( \frac{\partial G}{\partial x_k}\right)- I \cdot \text{Im} \left(\frac{\partial G}{\partial x_k}\right)=\overline{\frac{\partial G}{\partial x_k}}=-2  \overline{\frac{\partial L}{\partial x_k}}.$$
Thus $\grad_\bR L$ interpreted as a complex vector in coordinates $(x_1,\dots, x_n)$ is given by 
\begin{equation}\label{grad}
\grad_\bR L=-\left(\overline{\frac{\partial G}{\partial x_1}}, \dots, \overline{\frac{\partial G}{\partial x_n}}\right).
\end{equation}
Therefore, $\grad_\bR L=0$ if and only if $ \grad_\bC \;\Log F(X)=0$, which is equivalent to $\grad_\bC \;F(X)=0$.

\medskip
Let us finally discuss  the situation when one restricts $L(X)$ to an algebraic hypersurface $Y$. If $p\in Y$ is a singular point of $Y$, then both $L(X)$ and $\Log F(X)$ have critical points at $p$. If $p\in Y$ is a nonsingular point, then introduce new complex coordinates $(\tilde x_1,\dots, \tilde x_{n-1}, \tilde x_n)$ adjusted to the tangent plane to $Y$ at $p$. Namely, the origin with respect these coordinates is placed at $p$, the hyperplane spanned by 
 $\left(\tilde x_1,\dots, \tilde x_{n-1}\right)$ coincides with the tangent plane to $Y$ at $p$, and $\tilde x_n$ spans the same line as $\grad_\bC Q(p)$, where $Q$ is the polynomial defining $Y$.  Formulas~\eqref{gradR}  and ~\eqref{grad}   are valid with respect to the new coordinate system as well. The condition that $\left. L(X)\right\vert_Y$ has a critical point at $p$ means that $\grad_\bR L(p)$ lies in the complex line spanned by $\tilde x_n$ which is equivalent to the vanishing of $2n-2$ real quantities 
 $ -\text{Re}\left( \frac{\partial G(0)}{\partial \tilde x_1}\right),\text{Im} \left(\frac{\partial G(0)}{\partial \tilde x_1}\right),\dots, -\text{Re}\left(\frac{\partial G(0)}{\partial \tilde x_{n-1}}\right),\text{Im} \left(\frac{\partial G(0)}{\partial \tilde x_{n-1}}  \right).$
 Analogously, the condition that  $\left. G(X)\right\vert_Y$ has a critical point at $p$ means that $\grad_\bC G(p)$  spans  in the complex line spanned by $\tilde x_n$ which is equivalent to the vanishing of $n-1$ complex quantities 
 $ \frac{\partial G(0)}{\partial \tilde x_1}, \dots, \frac{\partial G(0)}{\partial \tilde x_{n-1}}.$ But vanishing of the former $(2n-2)$ real quantities is obviously equivalent to the vanishing of the latter $(n-1)$ complex quantities.
\end{proof} 

\begin{proof}[Proof of Theorem~\ref{ThR}] We want to find all the critical points of  $L(X)$ subject to the restriction  $R(X)=0$, that is for $X$ lying in  $\Omega\setminus \mathcal A$. (Recall that $L(X)$ is defined in $\bC^n\setminus \mathcal A$ and $\Omega$ is the hypersurface in $\bC^n$ given by $R(X)=0$.) Using Lemma~\ref{LF}, we need to find the critical points of the multi-valued holomorphic function $F(X)$ restricted to $\Omega$. System \eqref{eq:Lagr} of Lemma~\ref{Lagr} provides the corresponding  equations defining the critical points. In the particular case of $F(X)$ and $R(X)$ as above, this system reads as follows
\begin{equation}\label{eq:final}
\sum_{j=0}^p\frac{\nu_j}{x_k-a_j}+\sum_{i\neq k}\frac{1}{x_k-x_i}=\rho r^\prime(x_k),\; k=1,\dots, n\ \text{and} \ R(X)=r(x_1)+\dots + r(x_n)=0.
\end{equation}
This system contains $n+1$ equations in the $(n+1)$ variables $x_1,\dots, x_n$ and $\rho$.

Abusing our notation, assume for the moment that $X=(x_1,\dots, x_n)$ is not the set of variables for $\bC^n$, but some concrete complex vector in $\bC^n\setminus \mathcal A$ solving the system~\eqref{eq:final}. Introducing the polynomials $y(x)=\prod_{j =1}^n (x-x_j)$, $y_k(x)=y(x)/(x-x_k)$ and taking into account the obvious relations  
$y_k(x_k) = y^\prime (x_k)$,  $2 y_k^\prime(x_k) = y^{\prime\prime} (x_k)$, we conclude  that the first $n$ equations of the system  \eqref{eq:final} are equivalent to the system given by  
\begin{equation}\label{eq:int}
A(x_k) y^{\prime\prime} (x_k) + \left ( 2B(x_k) - \rho D(x_k) \right ) y^\prime(x_k) = 0,\ \ k=1,\ldots, n.
\end{equation}
Indeed, multiplying the $k$-th equation of \eqref{eq:final}  by $A(x_k)y_k(x_k)$, we obtain exactly the $k$-th equation of \eqref{eq:int}. 

Under our assumptions, 
$$
D(x_k) := A(x_k) \frac{\partial R(X)}{\partial x_k}=A(x_k)r^\prime(x_k)
$$
is  a polynomial in $x_k$ of degree $q$. Since the first term in \eqref{eq:int} is a polynomial of degree $n+p-1$ and the second one  
is a polynomial of degree $n+q-1$, then by the fundamental theorem of algebra, there exists a polynomial $V(x)$, of degree $\max \{p-1,q-1\}$, such that 
$$
A(x) y^{\prime\prime} (x) + \left ( 2B(x) - \rho D(x) \right ) y^\prime(x)+V (x)y = 0,
$$ 
where $y(x)=\prod_{j =1}^n (x-x_j)$. 
\end{proof}

Notice that the condition that  $R(X)$ is a rational symmetric function of a special form was imposed to guarantee  that the parametric Lam\'e equation 
admits a polynomial solution.

\begin{proof}[Proof of Theorem~\ref{th:Deg}] 
Indeed, observe that the above conditions show that the degenerate Lam\'e equation (\ref{DLDE}) takes the form 
$$
A(x) y^{\prime\prime} + 2 (\tilde B(x)  + A(x) r^\prime(x))y^{\prime}  + V(x) y
= 0.
$$
According to the second statement of Theorem \ref{ThR}, the existence of a pair $(V(x),y(x))$ yields that $X^\ast=(x_1^\ast, \dots, x_n^\ast)$ is a critical point of the restriction of the energy function $L(X)$ to the hypersurface given by $R(X)=0$.   Here $V(x)$ is a Van Vleck polynomial 
satisfying  $\deg V \leq \max\{ \deg A -2, \deg q-1\}$ and $y(x)=(x-x_1^\ast)\cdots(x-x_n^\ast)$  is  a Stieltjes polynomial which satisfies the 
 latter differential equation where $X^\ast=(x_1^\ast, \dots, x_n^\ast)$ obeys the restriction  $R(X^\ast)=0$.  
\end{proof}

 \section{Examples} 
  
\subsection{Hermite polynomials in disguise} A straightforward change of variables in the differential equation (\ref{HDE}) satisfied by the  Hermite polynomials $H_n(x)$ implies that for any $m\in \mathbb{N},$
the polynomial $y(x) =y_{m,n}(x):= H_n(x^m)$ solves the degenerate Lam\'e differential equation 
$$
x\, y^{\prime\prime}(x) - (2\, m\, x^{2m} + m-1)\ y^{\prime}(x) + 2\, m^2 n\, x^{2m-1} y(x) = 0.
$$
For any fixed $m$, the zeros of  $H_n(x^m)$ are located at the intersections of the $2\, m$ rays emanating  from the origin with the slopes $e^{i\pi j/m}$, $j=0,\ldots, 2m-1$ and $[n/2]$ circles with the radii 
$(h_k)^{1/m}$, $k=1,\ldots, [n/2]$, where  $h_k$ are the positive zeros  of $H_n(x)$.  When $n$ is  odd, there is an additional zero of multiplicity $m$ at the origin.     

  Theorem \ref{th:Deg} implies that the coordinates of these $mn$ zeros  form  a critical point  of the logarithmic energy of the  electrostatic field generated by the moving charges together with 
the negative charge $-(m-1)/2$ at the origin, where the $mn$ movable charges $x_k$ are subject to the constraint 
$$
\sum_{k=1}^{mn} x_k^{2m} = \frac{m n (n-1)}{2}.
$$

\subsection{Laguerre polynomials in disguise}

A  procedure similar to that in the previous example shows that for any  $m\in \mathbb{N}$,
the polynomial $y(x)=y_{m,n}(x): = L_n^{\alpha}(x^m)$ solves the differential equation 
$$
x\, y^{\prime\prime}(x) + (1+\alpha\, m - m\, x^{m})\ y^{\prime}(x) + m^2 n\, x^{m-1} y(x) = 0.
$$
For any fixed $m$, the zeros of  $L_n^{\alpha}(x^m)$ are located at the intersections of the $m$ rays emanating from to origin with the slopes  $e^{2\pi i/j}$, $j=0,\ldots, m-1$ and the $n$ circles with the radii 
$(\ell_k)^{1/m}$, $k=1,\ldots, n$, where $\ell_k$ are the zeros  of $L_n^{\alpha}(x)$. 
    
 Theorem \ref{th:Deg} implies that the coordinates of these  $mn$ zeros  form a critical point of the logarithmic energy of the electrostatic field generated by the moving charges together with 
the charge $(1+\alpha m)/2$ at the origin, where the $mn$ movable charges $x_k$ are subject to the constraint 
$$
\sum_{k=1}^{mn} x_k^m = m n (n+\alpha).
$$

\subsection{Laguerre polynomials and electrostatic problem with a rational constraint}
\label{S3.3}

Substituting  $x \mapsto x+1/x$ in $L_n^{\alpha}(x)$,  we conclude that the polynomial of degree $2n$  $$Y(x) = x^n L_n^{\alpha}(x+1/x)$$  solves the differential equation
$$
A(x)\, y^{\prime\prime}(x) + {B}(x)\ y^{\prime}(x) + V(x) y(x) = 0,
$$
with
\begin{eqnarray*}
A(x) & = & x^6 - x^2,\\
{B}(x) & = & - x^6 + x^2 + (a+1-2n )x^5 + x^4 - 2(a+2) x^3 + (a+ 2 n -1) x -1,\\
V(x) & = & 2 n x^5 +n(n-a) x^4 - 4 n x^3 + 2 n (a+2) x^2 + 2 n x -n(n+a).
\end{eqnarray*} 

Now take $r(x)=x+1/x$. Then $A(x) r^\prime(x)= (x^2+1) (x^2-1)^2$ and 
$
{B}(x) = - A(x) r^\prime(x) + 2 \tilde B(x), 
$
where 
$2 \tilde B(x) = (a+1-2n )x^5 - 2(a+2) x^3 + (a+ 2 n -1) x$. 

\medskip The partial fraction decomposition of $\tilde B(x)/A(x)$ is given by
$$
\frac{\tilde B(x)}{A(x)} = \left(-n+\frac{1 - a}{2}  \right) \frac{1}{x} - \frac{1}{2} \left( \frac{1}{x - 1} + \frac{1}{x +1} \right) + \frac{a+1}{2} \left( \frac{1}{x - i} + \frac{1}{x + i} \right).
$$ 
  Theorems 1 and 2 imply that the $2n$ zeros of $x^n L_n^{\alpha}(x+1/x)$ are the coordinates of a critical point of the logarithmic energy of the electrostatic field generated by the movable charges together with  
the following five fixed charges. One charge equal to $-n+ (1 - a)/2$ is placed at the origin; two charges equal to $-1/2$ are placed  at $\pm 1$,  and two charges $(a+1)/2$ are placed at $\pm i$. 

\medskip
The $2n$ unit movable charges obey the constraint  
$$
\sum_{k=1}^n (x_k + 1/x_k) = \frac{n\, ( (\alpha+1)(n+\alpha)+1)}{\alpha+1}. 
$$
Since the relation $x_k + 1/x_k=\ell_k$  associates each zero $\ell_k$ of $L_n^{\alpha}(x)$ to 
two zeros of $x^n L_n^{\alpha}(x+1/x)$, we conclude that  the critical points are either positive reals or belong to the semicircle $\{ x\in \mathbb{C} : \ |x|=1, \Re(x)>0 \}$.

 \subsection{Schr\"odinger-type equations}  Now we consider another type of analogs of the equation satisfied by the Hermite polynomials. We say that \eqref{Lame} is of \emph{Schr\"odinger-type} if 
 $A(x)$ is a non-vanishing  constant (which we can always assume equals to $1$).  In this case, there are no fixed charges and the system of  equations defining the equilibrium (usually called the Bethe ansatz) is given by 
 \begin{equation}\label{Bethe}
 \sum_{j\neq k}\frac{1}{x_k-x_j}=-B(x_k),\; k=1,\dots, n.
 \end{equation}
 
Denoting by $r(x)$  a primitive function of $B(x)$, we observe that equation \eqref{Bethe} determines critical points of the Vandermonde function
 $$Vd(x_1,\dots, x_n):=\prod_{1\le i<j\le n}|x_i-x_j|$$
 on the hypersurface $\mathcal H$ given by the equation 
 $$
 R(x_1,\dots, x_n):=r(x_1)+r(x_2)+\dots + r(x_n)=C
 $$ 
for an appropriate constant $C$. (Notice that  the critical points of the Vandemonde function and its generalizations to several types of hypersurfaces have been studied in  \cite{LOS, LOS2, Lu}.)

\medskip
Some interesting examples of Schr\"odinger-type equations  are satisfied by the Laguerre polynomials of certain degrees with special values of  parameter $\alpha$.
More precisely, for a given $m\in \mathbb{N}$,  consider  the equation 
\begin{equation}
\label{SLDE}
y^{\prime\prime}(x) - (m+1)\, x^m y^{\prime}(x) + m(m+1)\, n\, x^{m-1}  y(x) =0.
\end{equation}
One expects that  eventual polynomial solutions of \eqref{SLDE} must have degree $mn$. Observe that a basis of 
 linearly  independent solutions of \eqref{SLDE}  is given by 
$$
    \, _1F_1(\, -mn/(m+1), \,1- 1/(m+1),\, x^{m+1})
$$
and
$$
x \cdot \, _1F_1(\, (1-mn)/(m+1),\, 1+ 1/(m+1),\, x^{m+1})  ,
$$
where $\, _1F_1(a, b, x)$ is the basic hypergeometric function given by 
$$
\, _1F_1(a, b, x) := \sum_{j=0}^\infty \frac{(a)_j}{(b)_j} \frac{x^j}{j!}.
$$
Here $(t)_j$ is the standard Pochhammer symbol defined by $(t)_0:=1$ and  $(t)_j:=t(t+1)\cdots (t+j-1)$, $j$ being a positive integer. 

\medskip
It is clear that $\, _1F_1(a, b, x) $ reduces to a polynomial if and only if $a$ is a nonpositive integer, i.e., $a=-N$ with $N\in \mathbb{N}$. 
Moreover for $b>0$, these polynomials coincide with the 
Laguerre polynomials since  $L_N^{\alpha}(x)$ is a constant multiple of $\, _1F_1(-N, \alpha+1, x)$. Therefore,  
 equation (\ref{SLDE}) has a polynomial solution if and only if either $(1-mn)/(m+1)$ or $-mn/(m+1)$ is a negative integer 
$-N$. Moreover,  in these cases   $x L_N^{1/(m+1)}(x^{m+1})$ and  $L_N^{1/(m+1)}(x^{m+1})$ are respective polynomial solutions.

\medskip
 Let us first  consider the case when  
$(1-mn)/(m+1)=-N$. It is not difficult to observe that for a fixed 
$m\in \mathbb{N},$ the pairs $(n,N)=(d m+m-1,  dm-1)$ satisfy the above relation for any nonnegative integer $d$. Therefore for every $d\in \mathbb{N}$,  the polynomials 
$x L_{dm-1}^{1/(m+1)}(x^{m+1})$ are solutions of (\ref{SLDE}). Similar reasoning yields that the relation 
$mn/(m+1)=N$, with fixed $m$ is satisfied by the pairs $(n,N)=((m+1)d, md)$, implying that  $L_{dm}^{1/(m+1)}(x^{m+1})$  are also solutions 
of (\ref{SLDE}) for every $d\in \mathbb{N}$.

\medskip
Applying Theorem \ref{th:Deg}  to (\ref{SLDE}) we conclude the following. Since $A(x)\equiv 1$, there are 
no fixed charges. There are $Nm$ movable charges $x_k$ in the complex plane  which obey the constraint 
$$
\sum_{k=1}^{mn} x_k^{m+1} = C,
$$
where $C=(m+1)N(N+1)/(m+1))$. In one of the situations $N=md-1$, or equivalently $N=(mn-1)/(m+1)$ and in the other case $N=md-1=mn/(m+1)$. In the first case,  
one charge is at the origin and the remaining  $nm-1=N(m+1)=(m+1)(md-1)$ are placed on the $m+1$ rays $e^{2j \pi i/(m+1)}$, $j=0,\ldots,m$. In the second case,  there are only $nm=N(m+1)=dm(m+1)$ 
charges placed on the latter  $m+1$ rays.

 \subsection{Relativistic Hermite polynomials}
 
\medskip
 Our last example illustrates how polynomial solutions of a non-degenerate Lam\'e equation depending on a parameter  become  polynomial solutions of a degenerate Lam\'e equation in case  of the zeros of the so-called relativistic Hermite polynomials $H_n^N(x) $, see \cite{GavVA}. Namely,   for  
 any positive number $N>0$,  define  $H_n^N(x) $ by the Rodrigues formula
$$
H_n^N(x) := (-1)^n \left( 1 + \frac{x^2}{N} \right)^{N+n} \left( \frac{d}{dx} \right)^n \frac{1}{(1+x^2/N)^N}.
$$
One can show that  for $N> 1/2$,  polynomials $H_n^N(x)$ are orthogonal with respect to the following varying weight:  
$$
\int_{-\infty}^{\infty} H_m^N(x) H_n^N(x)\, \frac{dx} {(1+x^2/N)^{N+1+(m+n)/2}} = c_n \delta_{mn}.
$$ 
\medskip
Additionally,  $H_n^N(x)$ is the unique polynomial solution of the non-degenerate Lam\'e  equation
$$
( N + x^2)\, y^{\prime\prime} - 2\, (N+n-1)\, x\, y^{\prime} + n\, (2N+n-1)\, y = 0.
$$
\medskip
The partial fraction decomposition of $B(z)/A(z)$ is given  by
$$
\frac{B(x)}{A(x)} = - \frac{N+n-1}{2} \left( \frac{1}{x-i\sqrt{N}} + \frac{1}{x+i\sqrt{N}} \right).
$$ 

\medskip
Placing two fixed equal negative charges  $-(N+n-1)/2$  at $\pm i\sqrt{N}$ 
 and $n$ unit movable charges on the real line,  we obtain after a straightforward calculation, that the zeros of $H_n^N(x)$ coincide with the unique equilibrium configuration  of $n$ movable  unit 
 charges where the energy attains its minimum. In fact, this minimum is  global.  
 
\medskip 
Let us briefly discuss  how this equilibrium configuration depends on the positive parameter $N$. One can deduce the following.  
\begin{itemize}
\item When $N$ is a small positive number, then the fixed negative charges are located close to the origin and their total strength equals $-(n-1)$. Therefore,  they attract all movable 
charges to the origin. 
\item When $N$ grows, then  all movable charges move away from the origin because the force of attraction of the fixed negative charges decreases. This  can be proved rigorously via a refinement of Sturm's comparison theorem obtained in \cite{DKD}.

\item When $N\rightarrow +\infty$, the force of attraction of the fixed negative charges decreases 
because they move away from the real line, but at the same time their  strength increases in such a way that the location of each movable charge has a limit, see Fig.~1.
\end{itemize}

\begin{figure}\label{fig1}
      \includegraphics[width=7.0cm]{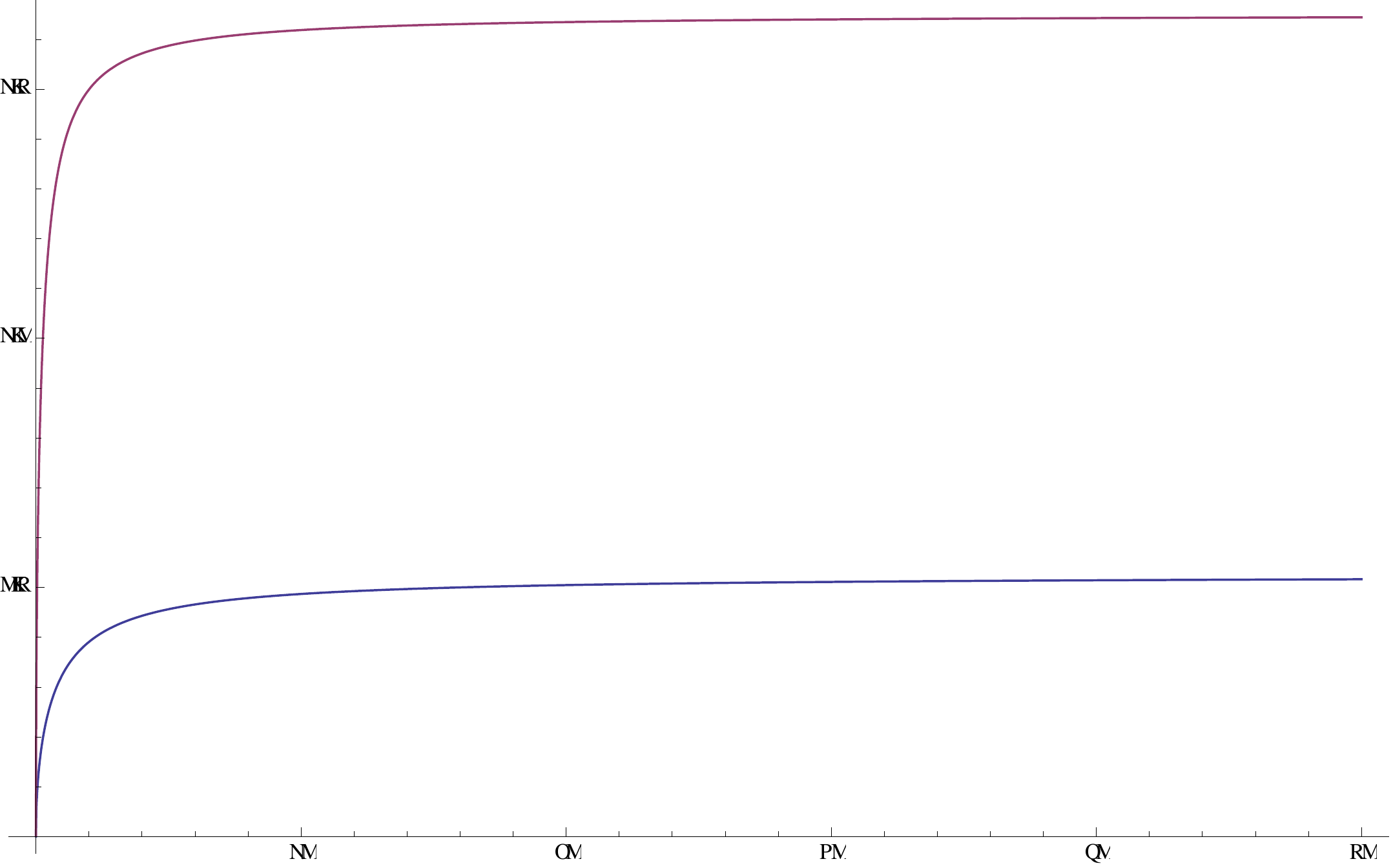}\\
\caption{Dependence of the positions of the two positive zeros of $H_4^N(x)$ on  $N$. (The remaining two zeros of $H_4^N(x)$ are negative.)}
     \end{figure}\vspace{0cm}

\medskip
The conclusion based on the above observations is as follows.  
Since $H_n^N(x)$ converges locally uniformly to the Hermite polynomial $H_n(x)$ as $N$ goes to infinity, the zeros of $H_n^N(x)$ converge to those of $H_n(x)$.
 Therefore, when $N$ increases, the negative charges at $\pm i \sqrt{N}$ increase in absolute value. The corresponding  equilibrium configuration formed by the coordinates of the zeros of $H_n^N(x)$ is such that the latter zeros move monotonically to those of $H_n(x)$.   What is special about this phenomenon is that the influence of the increasing negative charges   at 
 $\pm i \sqrt{N}$ do not ``disappear at infinity" as one can suspect, but it  transforms into a constraint.

\end{document}